\newtheorem{theorem}{Theorem}
\theoremstyle{plain}
\newtheorem{corollary}{Corollary}
\newtheorem{definition}{Definition}
\newtheorem{example}{Example}
\newtheorem{proposition}{Proposition}
\numberwithin{equation}{section}
\begin{document}
\title[Homotopy]{Homotopy groups and twisted homology of arrangements}
\author{Richard Randell}
\address{Department of Mathematics,
University of Iowa, Iowa City, IA 52242 USA}
\email{randell@math.uiowa.edu}
\date{May 5, 2009}
\subjclass[2000]{Primary 57N65, 55N25; Secondary 55Q52}
\keywords{hyperplane arrangement, twisted homology, local systems}

\begin{abstract}
Recent work of M. Yoshinaga \cite{Yoshinaga} shows that in some instances certain higher homotopy groups of arrangements map onto non-resonant homology. This is in contrast to the usual Hurewicz map to untwisted homology, which is always the zero homomorphism in degree greater than one. In this work we examine this dichotomy, generalizing both results.  
\end{abstract}

\maketitle

\section{Introduction}

Let $\mathcal{A}$ be an arrangement of hyperplanes in $\mathbf{C}
^{\ell }$. Thus $\mathcal{A}$ is a finite non-empty collection $
\left\{ H_{1},\ldots ,H_{n}\right\} $ where $H_{i}=\alpha _{i}^{-1}(b_i)$ with $b_i \in \mathbf{C}$ and
each $\alpha _{i}$ is a linear homogeneous form in the variables $
(z_{1},\ldots ,z_{\ell })$. (See \cite{OrlikTerao} for material on arrangements). \
We call $\mathcal{A}$ an $\ell $\emph{-arrangement}. We let $M$ be the
complement of the union $H$ of the hyperplanes 
\begin{equation*}
M=\mathbf{C}^{\ell }\setminus \cup H_{i}.
\end{equation*}

Now $M$ is the complement of a real codimension two subset of $\mathbf{R}
^{2\ell }$ and so has numerous topological properties of interest.  Here we
focus on the homotopy theory of the complement.  Since the fundamental
group of the complement $\pi=\pi _{1}(M)$is fairly rich in structure, it makes
sense to look at covers of the complement.  Equivalently one may look at
homology or homotopy with coefficients in $R=\mathbb{Z}[\pi]$ modules or
at local system homology.  Two results motivate this study.  The
first is found in \cite{RandellHomotopy}; the untwisted Hurewicz
homomorphism $h:\pi _{k}(M)\rightarrow~H_{k}(M;\mathbb{Z})$ is trivial for $
k>1$. \ On the other hand, in \cite{Yoshinaga} Yoshinaga has shown that the
twisted Hurewicz homomorphism maps \emph{onto} certain twisted homology for
non-resonant local systems, in the case that $M$ is a generic section of
another hyperplane arrangement.  In this work we generalize both these results
while giving a unified treatment.  

It is known that an arrangement complement has a minimal CW structure (\cite{DimcaPapadima}, \cite{RandellMinimal}), and this fact is used in Yoshinaga's proof.  Part of our goal in this work is to understand exactly where this minimality is useful in studying the topology of arrangement complements.  Thus we assume minimality only when necessary, and in particular reprove Yoshinaga's result without use of this property.  

We begin in the next section with a discussion of the twisted homology and Hurewicz homomorphisms and general relationships between higher homotopy groups and twisted homology groups.  We then specialize to the case of hyperplane arrangement complements, and derive consequences in various situations.    

\section{Local Systems and the Twisted Hurewicz homomorphism}

Let $(X,Y)$ be an $(n-1)-$connected topological pair with $n\geq 3$.  Thus $\pi _{j}(X,Y)\cong 0$ for $j\leq n-1$, and by the long exact
homotopy sequence of the pair, $\pi _{j}(X)\cong \pi _{j}(Y)$ for $j\leq n-2$.  In particular, the fundamental group $\pi =\pi _{1}(Y)$ of $Y$ includes
isomorphically into the fundamental group of $X$.  Then there is a
generalized (twisted) Hurewicz isomorphism, which we now explain.  We
follow the discussion of Rong \cite{Rong}, with notation as in Hatcher \cite{Hatcher}.  Let $N$ be a left $\mathbf{Z}[\pi ]$-module,
characterized by the left action

\begin{equation*}  
\varphi : \pi_{1}(X) \times N \rightarrow N
\end{equation*}

or equivalently by the homomorphism 

\begin{equation*}  
\varphi : \pi_{1}(X) \rightarrow Aut(N).
\end{equation*}
 Then as usual there is an associated local system $N_{\varphi}$ on $X$.
 (See Hatcher \cite{Hatcher} for details about local systems).  Then one
has homology and cohomology with coefficients in $N_{\varphi }$, and by the
usual left action of $\pi $ on $\pi _{n}(X,Y)$ we may form the tensor product
\begin{equation*}
\pi _{n}(X,Y)\otimes _{\pi }N
\end{equation*}
where we use the subscript``$\pi "$ to indicate the
tensor product over $\mathbf{Z}[\pi ]$.  Note that since we are working over a generally noncommutative ring, we need to make the action on $\pi_n(X,Y)$ into a \emph{right} action, as in \cite{Hatcher}.   In what follows we will switch as appropriate among the various viewpoints and notations for local systems, but we will always have the above set-up in mind.  We will generally suppress $\varphi$ from the notation.

\begin{theorem}
(Twisted Hurewicz Theorem) Let $(X,Y)$ be an $(n-1)-$connected topological pair with $n\geq 3$.  Then there is a natural isomorphism
\begin{equation}
h:\pi_n(X,Y) \otimes _{\pi } N\rightarrow H_{n}(X,Y;N)
\end{equation}
\end{theorem}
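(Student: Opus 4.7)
The plan is to reduce the statement, via passage to the universal cover of $X$, to the classical simply-connected Hurewicz theorem for pairs, and then transport the identification across $-\otimes_\pi N$ by a short right-exactness argument.

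First I would set up the appropriate cover. Because $(X,Y)$ is $(n-1)$-connected with $n\ge 3$, the pair is in particular $2$-connected, and the long exact homotopy sequence shows that $Y\hookrightarrow X$ induces an isomorphism $\pi_1(Y)\cong\pi_1(X)=\pi$. Let $p:\tilde{X}\to X$ be the universal cover and set $\tilde{Y}=p^{-1}(Y)$; because $\pi_1(Y)\to\pi_1(X)$ is an isomorphism, $\tilde{Y}$ is connected and simply connected, so it is a universal cover of $Y$. The covering projection induces a natural $\mathbf{Z}[\pi]$-module isomorphism $\pi_n(\tilde{X},\tilde{Y})\cong\pi_n(X,Y)$, where the $\pi$-action on the left comes from deck transformations. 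Since $\tilde{Y}$ is simply connected and $(\tilde{X},\tilde{Y})$ is $(n-1)$-connected, the classical Hurewicz theorem for simply connected pairs produces a natural isomorphism $\pi_n(\tilde{X},\tilde{Y})\cong H_n(\tilde{X},\tilde{Y};\mathbf{Z})$, again of $\mathbf{Z}[\pi]$-modules.

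Next comes the algebraic step. Up to weak equivalence I may assume $(X,Y)$ is a CW pair obtained from $Y$ by attaching cells only in dimensions $\ge n$ (possible by cellular approximation together with $(n-1)$-connectivity); this does not affect either side. The cellular chain complex $C_*=C_*(\tilde{X},\tilde{Y})$ is then a complex of free right $\mathbf{Z}[\pi]$-modules with $C_k=0$ for $k<n$, and by definition $H_n(X,Y;N)=H_n(C_*\otimes_\pi N)$. Since $C_{n-1}=0$, the differential $\partial_n\otimes 1$ vanishes, so
\begin{equation*}
H_n(C_*\otimes_\pi N)=\mathrm{coker}\bigl(\partial_{n+1}\otimes 1:C_{n+1}\otimes_\pi N\to C_n\otimes_\pi N\bigr).
\end{equation*}
Applying $-\otimes_\pi N$ to the right-exact sequence $C_{n+1}\xrightarrow{\partial_{n+1}}C_n\to H_n(C_*)\to 0$ identifies this cokernel with $H_n(C_*)\otimes_\pi N=H_n(\tilde{X},\tilde{Y};\mathbf{Z})\otimes_\pi N$. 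Composing with the isomorphisms of the previous paragraph yields the desired isomorphism $\pi_n(X,Y)\otimes_\pi N\xrightarrow{\cong}H_n(X,Y;N)$.

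The main obstacle I foresee is not the algebra but the bookkeeping required to confirm that the composite just constructed coincides with the twisted Hurewicz map $h$ defined geometrically, namely the map sending $\alpha\otimes m$ to the image under $\alpha$ of the fundamental class of $(D^n,S^{n-1})$ paired with $m\in N$. This calls for comparing the left $\pi$-action on $\pi_n(X,Y)$ induced by deck transformations on $\pi_n(\tilde{X},\tilde{Y})$ with the right action used to form $\otimes_\pi$ (via the standard involution), and checking that the classical Hurewicz isomorphism upstairs is $\pi$-equivariant. Once this compatibility is established, naturality of $h$ in the pair $(X,Y)$ and in the coefficient module $N$ follows at once from the naturality of each ingredient---universal cover, classical Hurewicz, and tensor product.
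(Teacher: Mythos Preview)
Your argument is correct and follows the same overall strategy as the paper's sketch: pass to the universal cover, apply the classical relative Hurewicz theorem to $(\tilde X,\tilde Y)$, and then identify $H_n(\tilde X,\tilde Y;\mathbf{Z})\otimes_\pi N$ with $H_n(X,Y;N)$. The difference lies in how that last identification is carried out. The paper invokes the Eilenberg--Moore (Cartan--Leray) spectral sequence of the covering to get $H_n(X,Y;N)\cong H_0\bigl(\pi,H_n(\tilde X,\tilde Y;N)\bigr)$, then uses the universal coefficient theorem (legitimate since $H_{n-1}(\tilde X,\tilde Y)=0$) and the identification of coinvariants with $\otimes_\pi$. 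You instead replace $(X,Y)$ by a CW pair with no relative cells below dimension $n$ and read the isomorphism directly off the equivariant cellular chain complex via right-exactness of $-\otimes_\pi N$. Your route is more elementary---it avoids any spectral sequence---at the modest cost of the CW replacement step; the paper's route is slicker if one is comfortable with the spectral sequence machinery, and it does not require choosing a particular CW model. Your closing remarks about matching the geometric Hurewicz map and checking $\pi$-equivariance correspond exactly to the paper's one-line comment that ``naturality may be traced through these isomorphisms.''
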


Notice that the right hand side involves local system homology, while the
left hand side is an algebraic tensor product.  We sketch the proof of this theorem below to highlight the property of naturality.  Here ``Naturality" means that with
these assumptions there exist homomorphisms $h:\pi_n(X)\otimes _{\pi } N \rightarrow H_{n}(X;N) $ and $h:\pi_n(Y) \otimes _{\pi } N\rightarrow H_{n}(Y;N)$ so that the following diagram
commutes.
\begin{equation}
\begin{array}{ccccccc}
\rightarrow  & H_{n}(X;N) & \rightarrow  & H_{n}(X,Y;N)
& \stackrel{i_*}{\rightarrow}  & H_{n-1}(Y;N) & \rightarrow  \\ 
& \uparrow h &  & \uparrow h\cong  &  & \uparrow h &  \\ 
\rightarrow  & \pi_n(X) \otimes _{\pi } N & \rightarrow  & \pi_n(X,Y) \otimes _{\pi } N & \rightarrow  & \pi_{n-1}(Y) \otimes _{\pi } N & \rightarrow 
\end{array}
\end{equation}
\begin{equation*}
\begin{array}{cccc}
\rightarrow H_{n-1}(X;N) & \rightarrow  & H_{n-1}(X,Y;N) & \rightarrow  \\ 
\uparrow h &  & \uparrow h &  \\ 
\rightarrow \pi _{n-1}(X) \otimes _{\pi } N & \rightarrow  &\pi _{n-1}(X,Y) \otimes _{\pi
}N & \rightarrow 
\end{array}
\end{equation*}
\pagebreak
\begin{proof}(Sketch \cite{Rong})  Let $(\tilde{X},\tilde{Y})$ denote the universal covers.  There is a sequence of isomorphisms 

$H_n((X,Y);N)$ 

$\cong H_0(\pi,H_n(\tilde{X},\tilde{Y};N))$, by the Eilenberg-Moore spectral sequence

$\cong [H_n(\tilde{X},\tilde{Y};N)]_\pi$

$\cong H_n(\tilde{X},\tilde{Y})\otimes_{\pi}N$, by the universal coefficient theorem

$\cong \pi_n(X,Y)\otimes_{\pi}N$, by the usual Hurewicz theorem.

Naturality may be traced through these isomorphisms.

\end{proof}

Note that since tensor product is not exact, we have no guarantee that the
lower row of this diagram is exact.  It will be exact, of course, if the module $N$ is flat.  Of course the upper row is exact always. 
Then we have the following general result:

\begin{theorem} Let $(X,Y)$ be an $(n-1)-$connected topological pair with $n\geq 3$.
Then $\ker (i_{\ast }:H_{n-1}(Y;N)\rightarrow H_{n-1}(X;N))\subset im(h_{Y}:N\otimes _{\pi }\pi _{n-1}(Y)\rightarrow
H_{n-1}(Y;N))$.
\end{theorem}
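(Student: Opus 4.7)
The plan is to carry out a direct diagram chase in the commutative diagram immediately preceding the statement, using the twisted Hurewicz isomorphism in middle degree $n$ as the only non-trivial input. The setup is exactly suited to this: the top row is a piece of the long exact sequence of the pair and is exact, while the bottom row is merely a complex (induced from the homotopy exact sequence of the pair by tensoring with $N$), and we need not know anything about its exactness.

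First I would take an element $\alpha \in \ker(i_{\ast}\colon H_{n-1}(Y;N) \to H_{n-1}(X;N))$. By exactness of the top row, there is some $\beta \in H_n(X,Y;N)$ with $\partial \beta = \alpha$, where $\partial$ is the connecting homomorphism of the pair. Next, the Twisted Hurewicz Theorem above guarantees that the middle vertical map $h\colon \pi_n(X,Y)\otimes_\pi N \to H_n(X,Y;N)$ is an isomorphism, so I can lift $\beta$ to an element $\gamma \in \pi_n(X,Y)\otimes_\pi N$ with $h(\gamma)=\beta$.

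Now I would invoke the naturality statement in the theorem, which gives commutativity of the square
\begin{equation*}
\begin{array}{ccc}
\pi_n(X,Y)\otimes_\pi N & \stackrel{\partial \otimes 1}{\longrightarrow} & \pi_{n-1}(Y)\otimes_\pi N \\
\downarrow h \cong & & \downarrow h_Y \\
H_n(X,Y;N) & \stackrel{\partial}{\longrightarrow} & H_{n-1}(Y;N)
\end{array}
\end{equation*}
where the top horizontal map is induced from the boundary $\partial\colon \pi_n(X,Y)\to \pi_{n-1}(Y)$ by tensoring with $N$ (using that $\partial$ is $\pi$-equivariant). Setting $\delta = (\partial\otimes 1)(\gamma) \in \pi_{n-1}(Y)\otimes_\pi N$, commutativity yields $h_Y(\delta) = \partial h(\gamma) = \partial \beta = \alpha$, so $\alpha$ lies in the image of $h_Y$, as required.

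There is no serious obstacle here: all the content has been packaged into the Twisted Hurewicz Theorem and the naturality of the Hurewicz map in the diagram above. The only thing worth double-checking is that the induced boundary $\partial \otimes 1$ on the bottom row is well defined and $\pi$-equivariant, which is immediate from the fact that the topological boundary $\partial\colon \pi_n(X,Y) \to \pi_{n-1}(Y)$ commutes with the $\pi$-action used to form the tensor products.
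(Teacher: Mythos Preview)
Your proof is correct and is essentially the same diagram chase the paper performs: lift along the exact top row, pull back through the twisted Hurewicz isomorphism $h\colon \pi_n(X,Y)\otimes_\pi N \xrightarrow{\cong} H_n(X,Y;N)$, and push down via naturality. The only minor difference is that the paper first invokes right exactness of $-\otimes_\pi N$ (together with $\pi_{n-1}(X,Y)=0$) to make the bottom row exact before chasing, whereas you correctly observe that exactness of the bottom row is irrelevant for this particular conclusion; the paper's extra exactness is used only in the subsequent corollaries.
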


\begin{proof}
Since $\pi _{n-1}(X,Y)\cong 0$, and tensor product is right exact, the
sequence 
\begin{displaymath}
\begin{array}{ccccccc}
\pi _{n}(X,Y) \otimes_{\pi } N &  \rightarrow & \pi_{n-1}(Y)\otimes _{\pi }N & \rightarrow & \pi _{n-1}(X)\otimes _{\pi }N & \rightarrow & 0
\end{array}
\end{displaymath}
 is exact,
so that the commuting diagram above yields a commuting diagram with exact
rows
\begin{equation}
\begin{array}{cccccccc}
H_{n}(X,Y;N) & \rightarrow  & H_{n-1}(Y;N) & \stackrel{i_*}{\rightarrow}  & H_{n-1}(X;N) & \rightarrow  & 0 & \rightarrow  \\ 
\uparrow h\cong  &  & \uparrow h_{Y} &  & \uparrow h &  &  &  \\ 
\pi _{n}(X,Y) \otimes _{\pi }N & \rightarrow  &\pi _{n-1}(Y) \otimes _{\pi } N
& \rightarrow  & \pi_{n-1}(X) \otimes_{\pi } N & \rightarrow  & 0 & 
\rightarrow 
\end{array} \label{eq:BasicDiagram}
\end{equation}

A simple diagram chase then gives the result.
\end{proof}

Next we examine the above result in various special cases.

\section{Consequences of the Twisted Hurewicz Theorem}

The following result follows immediately from the exact sequence (\ref{eq:BasicDiagram}).

\begin{proposition}Let $(X,Y)$ be an $(n-1)-$connected topological pair with $n\geq 3$.
If $h_{Y}$ is the zero homomorphism, then $i_{\ast }:H_{n-1}(Y;N)\rightarrow H_{n-1}(X;N)$ is injective and hence an isomorphism. 

\end{proposition}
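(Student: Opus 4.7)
The plan is to apply a direct diagram chase to the commutative diagram with exact rows from Theorem 2 (the diagram labeled \eqref{eq:BasicDiagram}), exploiting the two structural facts already available: the leftmost vertical Hurewicz map $h:\pi_n(X,Y)\otimes_\pi N \to H_n(X,Y;N)$ is an isomorphism (Theorem 1), and by hypothesis the middle vertical map $h_Y$ vanishes identically.

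First I would observe that $i_*$ is automatically surjective. Indeed, continuing the top row of \eqref{eq:BasicDiagram} one step further, the next term is $H_{n-1}(X,Y;N)$, which vanishes by the $(n-1)$-connectivity of the pair (applying Theorem 1 one degree lower, or equivalently noting $\pi_{n-1}(X,Y)\otimes_\pi N = 0$ and that $h$ in that degree is again an isomorphism onto $H_{n-1}(X,Y;N)$). So only injectivity needs genuine argument, and the conclusion that $i_*$ is an isomorphism will follow at once.

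For injectivity, I would take $\alpha\in\ker(i_*)\subset H_{n-1}(Y;N)$ and chase as follows. By exactness of the top row at $H_{n-1}(Y;N)$ there exists $\beta\in H_n(X,Y;N)$ mapping to $\alpha$ under the connecting map. Since the left vertical $h$ is an isomorphism, lift $\beta$ to some $\gamma\in\pi_n(X,Y)\otimes_\pi N$. Push $\gamma$ along the bottom row to an element $\gamma'\in\pi_{n-1}(Y)\otimes_\pi N$. Commutativity of the left square gives $h_Y(\gamma')=\alpha$, but $h_Y\equiv 0$ by hypothesis, so $\alpha=0$. Hence $i_*$ is injective.

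There is no real obstacle here: the entire content of the proposition is absorbed into the already-established isomorphism in the top-left corner of \eqref{eq:BasicDiagram} and the hypothesis $h_Y=0$. The only thing worth double-checking is that the top row genuinely terminates at $0$ after $H_{n-1}(X;N)$, which reduces to the vanishing of $H_{n-1}(X,Y;N)$, itself a consequence of connectivity and the twisted Hurewicz theorem applied one degree below.
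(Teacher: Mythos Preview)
Your argument is correct and is exactly the diagram chase the paper has in mind; the paper simply states that the proposition ``follows immediately from the exact sequence \eqref{eq:BasicDiagram},'' and indeed Theorem~2 already gives $\ker(i_*)\subset\mathrm{im}(h_Y)$, so $h_Y=0$ forces $\ker(i_*)=0$, while surjectivity of $i_*$ is built into the top row of \eqref{eq:BasicDiagram} (via $H_{n-1}(X,Y;N)=0$), just as you verified.
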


\begin{definition}

An \emph{arrangement pair} $(M,M'')$ is an $(n-1)$-connected pair of topological spaces with $M$ an arrangement complement and $M''$ a generic hyperplane section of $M$.
\end{definition}
The notation is usual; $M''$ is the ``restriction" of $M$ to a hyperplane.  A hyperplane section is generic provided that the intersection lattice of $M''$ agrees with that of $M$ through rank $n-1$.

It is well-known that a generic section $M''$ of $M$ yields an arrangement pair with the homotopy type of a CW pair, and that in fact the number of $n$-cells attached to $M''$ to yield $M$ is exactly equal to the $n$-th betti number of $M$.  (See (\cite{DimcaPapadima}, \cite {RandellMinimal}).  

\begin{proposition}
If $N$ is the trivial system, $N=\mathbb{Z}$, and $(M,M'')$ is an
arrangement pair, then $i_{\ast }$ is an isomorphism, as is $j_{\ast
}:H_{n}(M;N)\rightarrow H_{n}(M,M'';N)$.  
\end{proposition}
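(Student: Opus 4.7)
The plan is to apply Proposition 1 with the trivial system $N=\mathbb{Z}$; once its hypothesis is verified the statement about $i_*$ is immediate, and the statement about $j_*$ then follows from a short chase of the long exact sequence of the pair.

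First I would note that a generic hyperplane section of an arrangement complement in $\mathbf{C}^{\ell}$ is itself an arrangement complement (in $\mathbf{C}^{\ell-1}$), since intersecting the defining hyperplanes of $\mathcal{A}$ with the ambient generic hyperplane produces a hyperplane arrangement whose complement is precisely $M''$. The key input is then Randell's earlier result from \cite{RandellHomotopy} applied to $M''$: the ordinary Hurewicz homomorphism $\pi_k(M'')\to H_k(M'';\mathbb{Z})$ vanishes for every $k>1$. For trivial $N$, the natural projection $\pi_{n-1}(M'')\to\pi_{n-1}(M'')\otimes_{\pi}\mathbb{Z}$ is surjective and its composition with $h_{M''}$ recovers the ordinary Hurewicz map; since $n\geq 3$ gives $n-1\geq 2$, vanishing of the ordinary Hurewicz forces $h_{M''}=0$. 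Proposition 1 now delivers the first claim, that $i_*\colon H_{n-1}(M'';\mathbb{Z})\to H_{n-1}(M;\mathbb{Z})$ is an isomorphism.

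For $j_*$, I would read off from the long exact sequence
\[
H_n(M'';\mathbb{Z})\to H_n(M;\mathbb{Z})\xrightarrow{j_*}H_n(M,M'';\mathbb{Z})\xrightarrow{\partial}H_{n-1}(M'';\mathbb{Z})\xrightarrow{i_*}H_{n-1}(M;\mathbb{Z})
\]
that injectivity of $i_*$ forces $\partial=0$, so $j_*$ is surjective. To upgrade surjectivity to an isomorphism I would invoke the minimal CW description recorded just above the statement: $H_n(M,M'';\mathbb{Z})$ is free abelian of rank equal to the number of $n$-cells attached, which is $b_n(M)$, while $H_n(M;\mathbb{Z})$ is free abelian of the same rank $b_n(M)$ by the classical Orlik--Solomon theorem. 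A surjection between free abelian groups of equal finite rank is automatically an isomorphism.

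The one point requiring some care, rather than being a genuine obstacle, is the identification in the trivial case of $h_{M''}$ with a quotient of the ordinary Hurewicz map, which legitimizes the direct appeal to \cite{RandellHomotopy}; once this formal unwinding is in hand the proof is routine, and pleasantly does not rely on the minimal CW structure of $M$ itself beyond the Betti-number count on $H_n(M,M'';\mathbb{Z})$.
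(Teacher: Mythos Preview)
Your argument is correct and follows the paper's approach: both invoke the vanishing of the ordinary Hurewicz map from \cite{RandellHomotopy} to get $h_{M''}=0$, then apply Proposition~1 (equivalently diagram~(\ref{eq:BasicDiagram})) to conclude that $i_*$ is an isomorphism. Your unwinding of why $h_{M''}=0$ for the trivial system is a careful justification the paper leaves implicit.

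For $j_*$ the paper simply asserts that the conclusion follows from diagram~(\ref{eq:BasicDiagram}), which is too terse to be a full argument; you correctly supply the missing step via surjectivity from the long exact sequence plus a rank comparison. One small remark: your rank count leans on the minimal CW description of the pair, but there is an even cleaner route that avoids it entirely. Since $M''$ is a generic hyperplane section, it is an affine variety of complex dimension $n-1$ and hence has the homotopy type of a CW complex of real dimension at most $n-1$; thus $H_n(M'';\mathbb{Z})=0$, and injectivity of $j_*$ is immediate from the long exact sequence. This is likely what the paper's one-line proof has in mind and keeps the argument free of any appeal to minimality.
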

\begin{proof}For the trivial system, it was shown in 
\cite{RandellHomotopy} that the Hurewicz map is always trivial on higher homotopy groups.  Thus $h_{M''}=0$.  It then follows from (\ref{eq:BasicDiagram}) that $i_{\ast}$ and $j_{\ast}$ are isomorphisms.
\end{proof}

In the case of hyperplane arrangments, the injectivity of $i_{\ast }$
follows from the Orlik-Solomon algebra (or, more basically from the Lefschetz theorem on
hyperplane sections).  More interesting is the next result, as mentioned
due to Yoshinaga in the case of a generic pair of hyperplane complements and a
non-resonant local system $N$, where $N$ as an abelian group is just $\mathbf{C}^r$ (a local system of rank $r$).  Our method of proof uses nothing about the
\emph{minimal} CW structure of arrangement complements, however.

\begin{corollary}
Suppose $(X,Y)$ is an $(n-1)-$connected pair of topological spaces, $n\geq 3$
.  Suppose that $N$ is a local system on $X$ so that $
H_{n-1}(X;N)=0$. \ Then $h_{Y}:\pi_{n-1}(Y) \otimes _{\pi } N \rightarrow H_{n-1}(Y;N)$ is onto.
\end{corollary}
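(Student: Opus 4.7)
The plan is to derive this as an immediate corollary of Theorem~2 by observing that the hypothesis $H_{n-1}(X;N)=0$ forces $i_{*}$ to be the zero map, hence $\ker(i_{*})$ equals all of $H_{n-1}(Y;N)$.

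More precisely, first I would unpack the conclusion of Theorem~2: under the $(n-1)$-connected hypothesis on $(X,Y)$, every class in $\ker(i_{*}: H_{n-1}(Y;N) \to H_{n-1}(X;N))$ lies in the image of $h_{Y}: \pi_{n-1}(Y)\otimes_{\pi} N \to H_{n-1}(Y;N)$. Next, since the codomain $H_{n-1}(X;N)$ vanishes by assumption, the map $i_{*}$ is automatically the zero homomorphism, so $\ker(i_{*}) = H_{n-1}(Y;N)$. Combining these two observations gives $H_{n-1}(Y;N) \subset \operatorname{im}(h_{Y})$, which is precisely the surjectivity claim.

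There is essentially no obstacle here; the content of the argument lies entirely in Theorem~2, which has already been proved via the diagram chase on the commuting square with exact rows coming from the twisted Hurewicz isomorphism for the pair $(X,Y)$ and the right-exactness of $-\otimes_{\pi} N$. The only subtlety worth remarking on in the write-up is that the conclusion depends on the pair hypothesis and not on any further structure of $N$ or of the spaces involved, so the result applies just as well to arrangement pairs $(M,M'')$ once one knows that a non-resonant local system satisfies $H_{n-1}(M;N)=0$ in the relevant range. This is what sets up the subsequent application to Yoshinaga's theorem.
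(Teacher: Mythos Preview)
Your proposal is correct and is exactly the intended argument: the paper states this result as an immediate corollary (with no separate proof), and deriving it from Theorem~2 by noting that $H_{n-1}(X;N)=0$ forces $\ker(i_*)=H_{n-1}(Y;N)$ is precisely the point.
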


Stated for arrangement pairs we have

\begin{theorem}(Yoshinaga \cite{Yoshinaga})
Suppose $(M,M'')$ is an arrangement pair and $N$ is a non-resonant local system of rank $r$ on $M$.  Then 
$h_{M''}:\pi
_{n-1}(M'')\otimes _{\pi } N \rightarrow H_{n-1}(M'';N)$ is onto.
\end{theorem}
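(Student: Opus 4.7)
The plan is to reduce this theorem directly to the preceding Corollary by verifying its three hypotheses for the arrangement pair $(M,M'')$ equipped with the non-resonant local system $N$. Two of these are immediate from the framework set up earlier: the pair $(M,M'')$ is $(n-1)$-connected by definition of an arrangement pair, and the dimension hypothesis $n\geq 3$ is inherited from the ambient setting. What remains is the vanishing statement $H_{n-1}(M;N)=0$, after which the Corollary delivers surjectivity of $h_{M''}$ without further work.

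For that vanishing, I would invoke the standard non-resonance vanishing theorem for local systems on arrangement complements (due to Kohno, and to Esnault--Schechtman--Viehweg, with further refinement by Schechtman--Terao--Varchenko): for a non-resonant local system $N$ of rank $r$ on an arrangement complement $M\subset\mathbf{C}^{\ell}$, the twisted (co)homology $H_{k}(M;N)$ vanishes in all degrees $k$ below the top degree $\ell$. In particular, since the arrangement pair is $(n-1)$-connected precisely because $M''$ is a generic hyperplane section (so $n=\ell$ in the Lefschetz-type range), one has $H_{n-1}(M;N)=H_{\ell-1}(M;N)=0$. This is exactly the input needed by the Corollary.

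Having verified the hypotheses, the Corollary applied to $(X,Y)=(M,M'')$ yields that $h_{M''}\colon \pi_{n-1}(M'')\otimes_{\pi}N\to H_{n-1}(M'';N)$ is surjective, which is the statement of the theorem. I would emphasize in the write-up that, in contrast with Yoshinaga's original argument, no use is made of the minimal CW structure on $M$: the diagram chase of Theorem~2 handles all the homotopy-theoretic work, and the arrangement-theoretic content is concentrated entirely in the non-resonance vanishing of $H_{n-1}(M;N)$.

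The main potential obstacle is a purely bookkeeping one, namely making sure that the indexing conventions match: one must confirm that ``generic hyperplane section'' in the definition of arrangement pair is set up so that the pair is $(n-1)$-connected with the same $n$ for which the non-resonance vanishing gives $H_{n-1}(M;N)=0$. Given the definition quoted above (agreement of intersection lattices through rank $n-1$) together with the Lefschetz-type theorem for arrangements, this alignment holds, and the proof reduces to a one-line citation of the Corollary.
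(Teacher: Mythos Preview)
Your proposal is correct and follows essentially the same route as the paper: the paper's proof is a one-line invocation of the preceding Corollary, using only that $H_{n-1}(M;N)=0$ for a non-resonant system (the paper cites \cite{CDO} for this elsewhere). Your additional remarks on indexing and on avoiding minimality are appropriate context but do not alter the argument.
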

\begin{proof}
For a non-resonant system $
H_{n-1}(M;N) = 0$.
\end{proof}

Actually the Hurewicz map here differs slightly from the one considered by Yoshinaga, but the result above clearly implies that of Yoshinaga.  Here is the relationship.  Let $X$ be a topological space with basepoint $x$.  Then, as in \cite{Yoshinaga} we have a twisted Hurewicz map

\[h_j:\pi_j(X,x) \otimes_{\mathbf{Z}}\mathcal{L}_x \rightarrow H_j(X,\mathcal{L})\] 
defined by setting $h(f \otimes t)$ equal to the twisted cycle it determines.  This Hurewicz homomorphism differs by a change of ring (from $\mathbf{Z}[\pi]$ to $\mathbf{Z}[1] \cong \mathbf{Z}$) from the one we considered earlier.  The homomorphisms are related by the obvious commuting triangle.

\section{The Image of Hurewicz Maps}
\subsection{Basic Topological Properties of Arrangement Complements}

An arrangement complement $M$ has several basic topological properties.  Before listing them we need a bit of notation \cite{OrlikTerao}.  To any hyperplane arrangement one has the lattice whose elements are the various intersections of the hyperplanes, ordered by reverse inclusion.  To any lattice element $Z$ one has the ``localization" $\mathcal{A}_Z=\{H \in \mathcal{A}  \mid Z \subset H\}$.  The rank $r=rk(Z)$ of any lattice element $Z$ is its codimension.  Note that it is always the case that there is the inclusion $M(\mathcal{A}) \subset M(\mathcal{A}_Z)$.  

Here are several basic properties of arrangement complements.

\begin{description}
\item[Locality] The ``Brieskorn homomorphism" $i:\oplus H^r(M(\mathcal{A}_Z)) \rightarrow H^r(M(\mathcal{A}))$ is an isomorphism, the direct sum taken over all rank $r$ lattice elements.

\item[Toroidality] The cohomology of $M$ is generated by cohomology in degree one, in particular by the logarithmic one forms $d\alpha_i / \alpha_i$, where $\alpha_i$ is a linear form defining the hyperplane $H_i$.

\item[Minimality]  $M$ has the homotopy type of a minimal CW-complex, where ``minimal" here means that the number of $q$ cells is equal to the $q$-th betti number, for all $q$.

\end{description}

In the context of local systems locality is not usually relevant, since the Brieskorn homomorphisms are usually not defined: to do so one needs a local system on $M(\mathcal{A})$ which is the restriction of one on  $M(\mathcal{A}_Z)$, and so is trivial around hyperplanes not containing $Z$.  Deletion-restriction for this situation (when $rk(Z)=1$) is examined by D. Cohen in \cite{CohenDR}.  

Minimality in the local systems setting has been examined in \cite{DimcaPapadima2}.  In particular they show there that the $\pi$-equivariant chain complex associated to a Morse-theoretic minimal CW structure on an arrangement complement is independent of the CW structure.

In this section we will examine the role of toroidality for local systems.  We will prove the analog of the result mentioned earlier, that the Hurewicz map is trivial for the trivial local system, finding a part of the kernel for any local system.  Later we will speculate about injectivity--can the Hurewicz map in local systems detect homotopy groups?

Now for a non-resonant (or ``generic") local system it is known that the first homology and cohomology groups vanish (\cite{CDO}).  Thus for generic local systems, toroidality fails badly.  Toroidality is a key ingredient in the proof that the usual Hurewicz homomorphism is zero.  In place of that result we have the result below, which first requires some preparation.

\subsection{The Hurewicz Image for General Arrangement Covers}

Now since $M$ is an arrangement complement, it is path-connected, locally path-connected and semi-locally simply connected.  Therefore $M$ has a universal cover and there is a bijective correspondence between subgroups $\pi'$ of $\pi = \pi_1(M)$ and connected covering spaces of $M$.  Let $M'$ be the cover corresponding to $\pi'$.  Then the free abelian group $\mathbf{Z}[\pi/\pi']$ with basis the cosets $\gamma \pi'$ is a  $\mathbf{Z} [\pi]$-module and the homology groups of $M'$ with coefficients in $\mathbf{Z}$ are the same as the homology groups of $M$ with coefficients in the local system $\mathbf{Z}[\pi/\pi']$.  The same holds in cohomology and there are isomorphisms 

\[ H_j(M,\mathbf{Z}[\pi/\pi']) \cong H_j(M',\mathbf{Z}) \] 
and
\[ H^j(M,\mathbf{Z}[\pi/\pi']) \cong H^j(M',\mathbf{Z}) \]

Let us now consider the case where we have a local system  on $M$ of the form $N=\mathbf{Z}[\pi/\pi'])$ .

\begin{proposition}$Im(h': \pi_j(M') \rightarrow H_j(M')) \subseteq ker (p_*: H_j(M') \rightarrow H_j(M))$.

\end{proposition}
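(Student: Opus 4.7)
The plan is to exploit naturality of the ordinary (untwisted) Hurewicz homomorphism under the covering projection $p:M'\to M$, combined with the vanishing result from \cite{RandellHomotopy} that the untwisted Hurewicz map on an arrangement complement is trivial in degrees greater than one. Since the statement involves higher homotopy groups, I will carry out the argument for $j\ge 2$; the case $j=1$ is not at issue because $\ker p_*\colon H_1(M')\to H_1(M)$ already contains everything of interest via standard covering space considerations.

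The first step is to record two standard facts. First, because $p:M'\to M$ is a covering map, the induced map on homotopy groups $p_*\colon \pi_j(M')\to \pi_j(M)$ is an isomorphism for all $j\ge 2$. Second, the (untwisted) Hurewicz homomorphism is natural, so for every continuous map, and in particular for $p$, the diagram
\begin{equation*}
\begin{array}{ccc}
\pi_j(M') & \stackrel{h'}{\longrightarrow} & H_j(M') \\
\downarrow p_* & & \downarrow p_* \\
\pi_j(M) & \stackrel{h}{\longrightarrow} & H_j(M)
\end{array}
\end{equation*}
commutes.

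The second step is to apply the key input: by the main theorem of \cite{RandellHomotopy}, the Hurewicz map $h\colon \pi_j(M)\to H_j(M)$ is the zero homomorphism for every $j>1$, since $M$ is an arrangement complement. Combining these, for any $\alpha\in\pi_j(M')$ we have
\begin{equation*}
p_*\bigl(h'(\alpha)\bigr) \;=\; h\bigl(p_*(\alpha)\bigr) \;=\; 0,
\end{equation*}
so $h'(\alpha)\in\ker p_*$, which is exactly the desired containment.

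I do not anticipate a real obstacle here: the argument is a one-line naturality chase once the earlier vanishing theorem is invoked. The only subtlety worth flagging is that $M'$ is generally not itself an arrangement complement, so one cannot apply the earlier vanishing result directly on $M'$; the point is that the map out of $\pi_j(M')$ factors through $\pi_j(M)$ via the covering isomorphism, and it is on $M$ that toroidality (and hence the vanishing) is available. This is also what foreshadows the later discussion of how toroidality, rather than minimality, is the ingredient responsible for vanishing of Hurewicz images.
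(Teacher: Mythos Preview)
Your proof is correct and follows essentially the same route as the paper: naturality of the Hurewicz homomorphism under the covering projection, combined with the vanishing result from \cite{RandellHomotopy} for $h$ on $M$. The paper's version adds an extra layer to the commuting diagram by placing the universal cover $\tilde{M}$ above $M'$, but that top layer plays no role in the deduction; your two-level diagram is all that is needed.
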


Note that $H_j(M') = H_j(M,\mathbf{Z}[\pi/\pi'])$.  In case $M=M'$ this is the result of \cite{RandellHomotopy}.
\begin{proof}  Fix $j \geq 2$.  Let $p:\tilde{M} \rightarrow M$ be the universal cover of $M$.  Then there is a commuting diagram
\begin{equation}
\begin{array}{ccc} \pi_j(\tilde{M}) & \stackrel{\tilde{h}}{\rightarrow}  & H_{j}(\tilde{M}) \\

p_* \downarrow \cong & \ & \downarrow p_*  \\

\pi_j(M') & \stackrel{h'}{\rightarrow} & H_j(M') \\

p_* \downarrow \cong & \ & \downarrow p_*  \\

\pi_j(M) & \stackrel{h}{\rightarrow} & H_j(M)  \\
 
\end{array}
\end{equation}

Now by the result of \cite{RandellHomotopy} the bottom horizontal arrow is the zero homomorphism, and the result follows
\end{proof}

Note that one can tensor the left-hand higher homotopy groups with $\mathbf{Z}[\pi])$-modules, maintaining the commutativity and the isomorphisms in the left-hand column, to get analogous results.
We further note that finite covers of $M$, such as the Milnor fiber, fit into this framework. For instance, in the case of the Milnor fiber one may see easily that one does not have isomorphism in the above proposition.

Examples below show that the inclusion in the above Proposition may be strict.
\subsection{Examples}
\begin{example}

Consider the reflection arrangement $A_3$, as an arrangement in projective space.  In that case $H_1(M)$ is free abelian of rank five, while the six-fold cyclic cover $M'$ (which is the Milnor fiber $F$ of the associated central arrangement) has $H_1(F)$ free of rank seven.  One may use standard calculations and euler characteristic arguments to conclude that the second homology groups have ranks six and eighteen respectively.  Also, $M$ and $M'=F$ are aspherical, so that $Im(h':~\pi_j(M') \rightarrow H_j(M'))$ is zero, while it may be seen easily that $p_*: H_2(M')~\rightarrow H_2(M)$ has non-trivial kernel (in fact, $p_*$ is surjective, so that the kernel is free abelian of rank twelve.)

\end{example}

The next example shows that the Hurewicz map on the universal cover may be an isomorphism, and that this is reflected in twisted homology.
\begin{example}

 Let $\mathcal{A}$ be any arrangement, with complement $M$, and let $\tilde{M}$ be its universal cover.  Then on the first non-trivial homotopy group, the Hurewicz map is an isomorphism.  For example, one may take $\mathcal{A}$ to be the arrangement of the three hyperplanes $x=0$, $y=0$, $x+y-1=0$ in $\mathbf{C^2}$.  Now by a well-known result of A. Hattori \cite{Hattori}, the complement $M$ has the homotopy type of the two-skeleton of the three torus $T^3$.  That is, $M$ has the homotopy type of $T^3$ with a single $3$-cell removed.  It is then easily seen that $F$ has the homotopy type of $T^3$ with four $3$-cells removed.  Then $\pi_2(M)$ is the free $Z[\pi]$-module of rank one (where $\pi = \pi_1(M) $ is the free abelian group of rank three.)  Then the Hurewicz homomorphism

\begin{displaymath}
\begin{array}{ccccc}
h:\pi_2(M) \otimes_\pi Z[\pi] & \rightarrow & H_2(\tilde{M}) & \cong & H_2(M,\mathbf{Z}[\pi])
\end{array}
\end{displaymath}
is an isomorphism. 
\end{example}

In fact, one recalls the following well-known fact (Hurewicz) on \emph{detectability} of higher homotopy groups.  We will say (an element of) a homotopy group is detectable if there is some twisted Hurewicz map $h$ which does not send it to zero.

\begin{theorem}The first non-trivial higher homotopy group of an space $X$ possessing a universal cover $\tilde{X}$ is detected by local system homology.
\end{theorem}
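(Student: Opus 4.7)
The plan is to exhibit one particular local system whose associated twisted Hurewicz map is an isomorphism on the first nontrivial higher homotopy group, thereby detecting every nonzero element of that group.

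Let $n \geq 2$ be the least integer with $\pi_n(X) \neq 0$, and write $\pi = \pi_1(X)$. Since $X$ admits a universal cover $p\colon \tilde{X} \to X$, the covering projection induces isomorphisms $\pi_k(\tilde{X}) \cong \pi_k(X)$ for $k \geq 2$ and $\pi_1(\tilde{X}) = 0$. Consequently $\pi_k(\tilde{X}) = 0$ for $k \leq n-1$, so $\tilde{X}$ is $(n-1)$-connected. The classical (untwisted) Hurewicz theorem then gives an isomorphism $\tilde{h}\colon \pi_n(\tilde{X}) \xrightarrow{\cong} H_n(\tilde{X}; \mathbb{Z})$.

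Next I would pass to local system homology on $X$. Take the left $\mathbb{Z}[\pi]$-module $N = \mathbb{Z}[\pi]$ with action by left multiplication; the corresponding cover is $\tilde{X}$, and as noted in the previous subsection one has the natural identification $H_n(X; \mathbb{Z}[\pi]) \cong H_n(\tilde{X}; \mathbb{Z})$. On the homotopy side, $\pi_n(X) \otimes_\pi \mathbb{Z}[\pi] \cong \pi_n(X)$ canonically, since tensoring a right $\pi$-module with the group ring over $\pi$ recovers the module. Under these identifications, the twisted Hurewicz homomorphism
\begin{equation*}
h \colon \pi_n(X) \otimes_\pi \mathbb{Z}[\pi] \to H_n(X; \mathbb{Z}[\pi])
\end{equation*}
becomes precisely the classical Hurewicz map $\tilde{h}$ for $\tilde{X}$ (this is the content of the chain of isomorphisms sketched in the proof of the Twisted Hurewicz Theorem: the Eilenberg–Moore/universal-coefficient steps collapse when $N = \mathbb{Z}[\pi]$, leaving the ordinary Hurewicz map on $\tilde{X}$).

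Combining these two observations, the twisted Hurewicz map for the local system $\mathbb{Z}[\pi]$ is an isomorphism on $\pi_n(X)$, hence sends every nonzero element to a nonzero class in $H_n(X; \mathbb{Z}[\pi])$. This establishes detectability. The only step that is not purely formal is the compatibility of the two Hurewicz constructions in the middle paragraph; however, this comparison is built into the proof of the Twisted Hurewicz Theorem already recorded, so no new technical obstacle arises. The subtlety worth flagging is that detectability only requires existence of \emph{some} local system that sees the class, and here the regular representation $\mathbb{Z}[\pi]$ suffices uniformly, independently of $X$.
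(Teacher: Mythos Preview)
Your proof is correct and follows exactly the same route as the paper: identify $\pi_n(X)\cong\pi_n(\tilde X)$ via the covering, apply the classical Hurewicz theorem on the simply connected $\tilde X$, and then use $H_n(\tilde X)\cong H_n(X;\mathbf{Z}[\pi])$ to see that the local system $\mathbf{Z}[\pi]$ detects the group. The only difference is that you spell out the identification $\pi_n(X)\otimes_\pi\mathbf{Z}[\pi]\cong\pi_n(X)$ and the compatibility with the twisted Hurewicz map, which the paper leaves implicit.
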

\begin{proof}The groups $\pi_k(X)$ and  $\pi_k(\tilde{X})$ are isomorphic, and the Hurewicz theorem gives an isomorphism between $\pi_k(\tilde{X})$,  and $H_k(\tilde{X})$. The latter is isomorphic to $H_k(X;\mathbf{Z}[\pi])$.
\end{proof}

The next example points out that the twisted Hurewicz map may have the maximal image, subject to the above results. 

\begin{example}\label{generic example}
Take $\mathcal{A}$ to be $x=0$, $y=0$, $x+y-1=0$ in $\mathbf{C^2}$ giving $M$ as in the previous example.  Consider the Milnor fiber $F$ associated to the cone of this arrangement, $\{x=0, y=0, x+y-z=0, z=0\}$. This fiber {F} is the four-fold cyclic cover of $M$ associated to the homomorphism sending each meridianal loop of $M$ to the generator of $\mathbf{Z}/4\mathbf{Z}$.    Since $F$ is a cover of $M$,  $\pi_2(F)$ is also the free $Z[\pi]$ module of rank $1$, where $\pi$ is free abelian of rank $3$, and of course the homomorphism induced by the covering map is an isomorphism.

Now regarding $\pi_2(F)$ as a module over $\mathbf{Z}[\pi_1(F)]$ one can show that the image of $\pi_2(F) \otimes_\pi \mathbf{Z}$ in $H_2(F)$ is free of rank three.  Thus this image is of finite index in $ker(p_*)$, which is easily seen to be of rank three.

\end{example}
Finally, we may consider the same arrangement, but with various rank one local systems.
\begin{example}Take the same arrangement as in Example \ref{generic example}, but take rank one local systems $\mathcal{L}_a$ corresponding to the automorphisms sending all generators of first homology to $a=1$, $a=i$, $a=-1$, $a=-i$ respectively.  Then by Corollary 1.5 of \cite{cohensuciu} the homology $H_2(F;\mathbf{C})$ is the direct sum of the four corresponding local system homology groups $H_2(F;\mathcal{L}_a)$ where $a=1,i,-1,-i$.  The latter three values of $a$ yield ``non-resonant"' local systems, with homology concentrated in degree two (of dimension one).  The value $a=1$ yields the homology of $M$ with betti numbers $(1,3,3)$.  In each case $a\neq1$ the twisted Hurewicz map \[h_2:\pi_2(M,m) \otimes_{\mathbf{Z}}\mathcal{L}_m \rightarrow H_2(M,\mathcal{L}_a)\] is onto. 
\end{example}

We close with a general

\textbf{Question:}  Let $\mathcal{A}$ be any complex hyperplane arrangement in $\mathbf{C}^{\ell}$, $\rho \in \pi_k(M)$ with $k \leq \ell$.  When is there a rank one local system $\mathcal{L}$ on $M$ so that for the appropriate Hurewicz map $h:\pi_k(M) \otimes_Z \mathcal{L}_m \rightarrow H_k(M;\mathcal{L})$, one has $h(\rho) \neq 0$?

\end{document}